\documentclass[11pt,leqno]{amsart}

\usepackage[all,cmtip]{xy}
\usepackage{a4,latexsym}
\usepackage[utf8]{inputenc}
\usepackage{amsfonts}
\usepackage[hidelinks]{hyperref}
\usepackage{amsxtra,amsmath,amsfonts,amscd, amssymb, mathrsfs, amsthm}
\usepackage{color, mathtools}
\usepackage{bbm}
\usepackage{enumitem}
\usepackage{xcolor}
\usepackage{graphicx}
\usepackage{tikz-cd}
\usepackage{braket}
\usepackage[all]{xypic}
\usepackage[toc,page,title,titletoc,header]{appendix}
\usepackage[capitalize]{cleveref}

\addtolength{\oddsidemargin}{-1cm}
\addtolength{\evensidemargin}{-1cm}
\addtolength{\textwidth}{2cm}
\addtolength{\topmargin}{-2cm}
\addtolength{\textheight}{2cm}

\numberwithin{paragraph}{section}

\setlist[enumerate]{label=\it{(\roman*)},
	ref=\it{(\roman*)}}


\newcommand{\C}{{\mathbb C}}
\newcommand{\R}{{\mathbb R}}

\newcommand{\Z}{{\mathbb Z}}







\newtheorem{theorem}{Theorem}[section]

\newtheorem{lemma}[theorem]{Lemma}
\newtheorem{lemma*}{Lemma}

\theoremstyle{definition}

\newtheorem{proposition&definition}[theorem]{Proposition\&Definition}
\newtheorem{lemma&definition}[theorem]{Lemma\&Definition}
\newtheorem{theorem&definition}[theorem]{Theorem\&Definition}

\newtheorem{example*}{Example}
\newtheorem{remark}{Remark}

\newtheorem{question*}{Question}

\newtheorem*{theorem2}{Theorem}

\def\p{{\partial}}
\def\S{\mathbf S}

\def\U{\mathcal U}
\def\W{\mathcal W}

\def\S{\mathbf S}

\def\ds{\displaystyle}

\numberwithin{equation}{section}

\begin{document}
	
	\title[ $L^p$-Boundedness of a Class of  Bi-Parameter  Pseudo-Differential Operators]{ $L^p$-Boundedness of a Class of  Bi-Parameter  Pseudo-Differential Operators }
	
	\author[J.~Cheng]{Jinhua Cheng}
	\address{J. Cheng,  School  of Mathematical Sciences, Zhejiang  University, 310058  Hangzhou, P. R. China }
	\email{chengjinhua@zju.edu.cn}
	

	\begin{abstract}
In this paper,  we explore a specific class of bi-parameter pseudo-differential operators characterized by symbols $\sigma(x_1,x_2,\xi_1,\xi_2)$ falling within the product-type Hörmander {class}  
 $\S^m_{\rho, \delta}$. This classification imposes constraints on the behavior of partial derivatives of $\sigma$ with respect to both spatial and frequency variables. Specifically, we  demonstrate that for each multi-index $\alpha, \beta$, the inequality
 $| \partial_\xi^\alpha \partial_x^\beta \sigma(x_1,x_2,\xi_1,\xi_2)| \le C_{\alpha, \beta}(1+|\xi|)^m\prod_{i=1}^2  (1+|\xi_i|)^{-\rho|\alpha_i|+\delta|\beta_i|} $
is satisfied.  Our  investigation culminates in a rigorous analysis of the $L^p$-boundedness of such pseudo-differential operators, thereby extending the seminal findings of C. Fefferman from 1973 concerning pseudo-differential operators within the Hörmander class.
	\end{abstract}
	
	\keywords{Pseudo-differential operators;  $L^p$-boundedness; Multi-parameter; BMO spaces} 
	\subjclass{{Primary 42B10;  Secondary 42B20, 42B30}}
	
	\maketitle
	
	\setcounter{tocdepth}{1}
	


\section{Introduction}
Consider a Schwartz function denoted by $f$. We define a pseudo-differential operator $T_{\sigma}$ as {follows:}  
\begin{equation}\label{pseudo operator}
T_{\sigma} f(x)=\int_{\mathbb{R}^n} e^{2\pi \mathrm{i} x\cdot\xi}\hat{f}(\xi) \sigma(x,\xi)d\xi,
\end{equation}
where $\hat{f}(\xi)$ represents the Fourier transform of $f$, and $\sigma(x,\xi)\in \mathcal{C}^{\infty}(\mathbb{R}^n\times \mathbb{R}^n)$ is referred to as the symbol. Of primary interest is the symbol class denoted $S^m_{\rho,\delta}$, commonly known as the Hörmander class. A symbol $\sigma(x,\xi)$ belongs to $S^m_{\rho, \delta}$ if it satisfies the following differential~inequalities:
\begin{equation}\label{Hormander class}
|\partial_\xi^\alpha \partial_x^\beta \sigma(x,\xi)|\le C_{\alpha,\beta} (1+|\xi|)^{m-\rho|\alpha|+\delta|\beta|},
\end{equation}
for all multi-indices $\alpha,~\beta$.

The $L^p$-boundedness of pseudo-differential operators,  defined  as in (\ref{pseudo operator}){--}(\ref{Hormander class}), has been a topic  of extensive investigation  in recent decades. Notably, the seminal works of Calderón and Vaillancourt \cite{CV71,CV72} established the $L^2$-boundedness of $T_\sigma$ for symbols $\sigma\in S^0_{0,0}$. Furthermore, Calderón and Vaillancourt showed that $T_{\sigma}$ remains bounded on $L^2$ when the symbol $\sigma$ belongs to $S^0_{\rho, \rho}$,  $0<\rho<1$, a class  known as the exotic symbol class. However, the boundedness results are not universal. For instance, consider the symbol  $\sigma(\xi)$ given by  the Fourier transform of  the Riemann singularity distribution $R(x) = e^{\frac{i}{|x|}} |x|^{-\frac{3}{2}}$, then $T_\sigma$
is not bounded on $L^p$ for $p \neq 2$. 
More recently, Wang \cite{W23} investigated a subclass of the exotic symbol class and demonstrated that pseudo-differential operators belonging to this subclass are bounded  on  $L^p$ for $0<\rho<1$.

The primary objective of this paper is to extend the following  theorem originally established by C. Fefferman in 1973 \cite{F73}.

\begin{theorem2}[Fefferman]
{Let} 
$\sigma(x, \xi) \in S^{m}_{\rho, \delta }$ with $0 \le \delta < \rho  <1$. Then
 \[
 \left\| T_\sigma f \right \|_{L^p(\R^n)} \le  C   \left\|  f \right \|_{L^p(\R^n)},   \quad 1<p< \infty, 
 \]
 when 
\[
   m \le -n(1-\rho) \left| \frac{1}{p}-\frac{1}{2} \right|.
\]
\end{theorem2}

To be more specific, we turn to the multi-parameter setting. Let $\sigma(x_1,x_2,\xi_1,\xi_2)\in \mathcal{C}^{\infty}(\mathbb{R}^{n_1}\times \mathbb{R}^{n_2} \times \mathbb{R}^{n_1}\times \mathbb{R}^{n_2})$, where $n=n_1+n_2$. We say $\sigma \in \S^m_{\rho,\delta}$ if it satisfies the  estimates

\begin{equation}\label{product Hormander class}
|\partial_\xi^\alpha \partial_x^\beta \sigma(x,\xi)|\le C_{\alpha,\beta} (1+|\xi|)^{m}\prod_{i=1}^2 \left( \frac{1}{1+|\xi_i|}\right)^{\rho|\alpha_i|-\delta|\beta_i|}
\end{equation}
for all multi-indices $\alpha,~\beta$.  Moreover, we define the bi-parameter H\"ormander class $BS^{m_1, m_2}_{\rho, \delta}$, we say 
$\sigma \in BS^{m_1, m_2}_{\rho,  \delta}$ if it satisfies the estimates:

\begin{equation}\label{product Hormander class}
|\partial_\xi^\alpha \partial_x^\beta \sigma(x,\xi)|\le  C_{\alpha,\beta} \prod_{i=1}^2 \left( \frac{1}{1+|\xi_i|}\right)^{\rho|\alpha_i|-\delta|\beta_i|- m_i }.
\end{equation}

Note that if $m=m_1+m_2$ and $m_1, m_2\le 0$, then we have

\[
 \S^m_{\rho,\delta} \subset  BS^{m_1, m_2}_{\rho,  \delta}.
\]

 {The}   classical theory of harmonic analysis may be described as around the Hardy--Littlewood maximal operator and its relationship with certain singular integral operators which commute with the classical one-parameter family dilations $\delta: x\rightarrow \delta x=(\delta x_1, \dots, \delta x_d), ~\delta >0$.  The multi-parameter theory, sometimes called product theory corresponds to a range of questions which are concerned with issues of harmonic analysis that are invariant with respect to a family of dilations $\delta: x\rightarrow \delta x=(\delta_1 x_1, \dots, \delta_d x_d), ~\delta_i >0, \linebreak i=1,\dots,d$. 
Such multi-parameter  symbol classes, associated with singular integral operators, pseudo-differential operators, and Fourier integral operators, have been the subject of extensive study by various authors. Notable contributions include works by Müller, Ricci, and Stein \cite{MRS95}, Yamazaki \cite{Y86}, Wang \cite{W22}, Chen, Ding, and Lu \cite{CDL20}, Huang and Chen \cite{HC21, XH21}, Hong, Zhang, and Lu \cite{HZ14, HL18, HLZ18, HZ17},
Muscalu,   Pipher,  Tao, and  Thiele \cite{MPTT04, MPTT06}, 
among others. The main result of this paper is the following

\begin{theorem}\label{multi-parameter pseduo}
 Suppose  $\sigma \in \S_{\rho, \delta}^{m}$,  $0\leq \delta<\rho<1$,  then 
 \[
 \left\| T_\sigma f \right \|_{L^p(\R^n)} \le  C   \left\|  f \right \|_{L^p(\R^n)},   \quad 1<p< \infty, 
 \]
 when 
\begin{equation}
   m \le -n(1-\rho) \left| \frac{1}{p}-\frac{1}{2} \right|.
\end{equation}
\end{theorem}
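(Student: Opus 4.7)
The plan is to follow Fefferman's three-step scheme, now carried out in the bi-parameter setting. First, establish the $L^2$ bound in the regime $m \le 0$; second, prove the endpoint estimate $T_\sigma : L^\infty \to \mathrm{BMO}_{\mathrm{prod}}(\R^{n_1} \times \R^{n_2})$ at the critical order $m = -\tfrac{n}{2}(1-\rho)$, where $\mathrm{BMO}_{\mathrm{prod}}$ is the Chang--Fefferman product BMO; third, interpolate between these to cover $2 \le p < \infty$, and dualize for $1 < p \le 2$. Note that at $p = 2$ the hypothesis reads $m \le 0$, while formally at $p = \infty$ it reads $m \le -\tfrac{n}{2}(1-\rho)$, so the linear interpolation of $m$ in $|1/p - 1/2|$ yields precisely the stated condition.

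For Step 1, I would decompose $\sigma$ using a bi-parameter Littlewood--Paley partition $\sigma = \sum_{j_1, j_2 \ge 0} \sigma_{j_1, j_2}$ with $\sigma_{j_1, j_2}$ supported where $|\xi_i| \sim 2^{j_i}$, and close with a two-parameter Cotlar--Stein almost-orthogonality argument, the hypothesis $\delta < \rho$ furnishing off-diagonal decay in both $|j_1 - k_1|$ and $|j_2 - k_2|$. For Step 2, the kernel $K_{j_1, j_2}(x,y)$ of $T_{\sigma_{j_1, j_2}}$ exhibits off-diagonal decay on anisotropic rectangles $|x_i - y_i| \gtrsim 2^{-\rho j_i}$ with an overall loss controlled by $(1+|\xi|)^m$; using the inclusion $\S^m_{\rho, \delta} \subset BS^{m_1, m_2}_{\rho, \delta}$ from the introduction with the splitting $m_i = -\tfrac{n_i}{2}(1-\rho)$, one can rewrite these kernel estimates in a genuinely product form. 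To prove the endpoint, I would verify the adjoint statement $T_\sigma^* : H^1_{\mathrm{prod}} \to L^1$: given a product $H^1$ atom $a$ adapted to an open set $\Omega$, split the integral of $|T_\sigma^* a|$ over the complement of a Journé enlargement $\widetilde{\Omega}$ by frequency scales $(j_1, j_2)$, handling near-diagonal contributions via the $L^2$ bound from Step 1 and Cauchy--Schwarz on each dyadic sub-rectangle of $\Omega$, and far-from-diagonal contributions via the kernel decay and cancellation of $a$ in both variables.

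The main obstacle is the bi-parameter endpoint. Atoms of product $H^1$ are not supported on single rectangles but on arbitrary open sets of finite measure, and the one-parameter John--Nirenberg inequality does not extend straightforwardly to $\mathrm{BMO}_{\mathrm{prod}}$; one must invoke Journé's covering lemma, with its two scale parameters matched to the Littlewood--Paley scales $2^{-\rho j_1}$ and $2^{-\rho j_2}$, and carefully balance the resulting measure estimate for $\widetilde{\Omega}$ against the directional kernel decay and the loss factor $(1+|\xi|)^m$. Tracking this balance while summing over $(j_1, j_2)$ is the technical heart of the proof and is precisely where the condition $0 \le \delta < \rho < 1$ is decisively used: the gap $\rho - \delta > 0$ produces the convergent geometric series in the almost-orthogonality at $p=2$ and a summable tail against the Journé enlargement at the $\mathrm{BMO}_{\mathrm{prod}}$ endpoint.
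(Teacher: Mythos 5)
Your overall scheme ($L^2$ bound for order $0$, an endpoint estimate at order $-\tfrac n2(1-\rho)$, interpolation, duality) matches the paper's, and your Step 1 is essentially the paper's Lemma \ref{pseudo L^2 to L^2} (Cotlar--Stein after a bi-parameter Littlewood--Paley decomposition and anisotropic rescaling). The divergence, and the problem, is at the endpoint. The paper proves $T_\sigma:L^\infty\to BMO$ with the \emph{classical one-parameter} $BMO(\R^n)$ of John--Nirenberg (Lemma \ref{BMO}), which is all that the Fefferman--Stein complex interpolation with $L^2$ requires; it then runs Fefferman's original cube argument: fix a cube $Q$ of side $r$, split $\sigma=\sigma^0+\sigma^1$ at frequency $r^{-1}$, control $\nabla_x T_{\sigma^0}f$ pointwise, and treat $T_{\sigma^1}$ by multiplying by a bump $\lambda$ and using the $L^2$ bound plus a commutator estimate. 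You instead aim for $\mathrm{BMO}_{\mathrm{prod}}$ via product $H^1$ atoms and Journ\'e's covering lemma. That target is strictly harder than what is needed, and you do not actually establish it: the step you yourself call ``the technical heart'' --- balancing the measure of the Journ\'e enlargement against the directional kernel decay while summing over $(j_1,j_2)$ --- is named but never executed. As written, the endpoint is an unproved plan, so the argument has a genuine gap.

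There is also a more specific missing idea that affects your kernel estimates regardless of which $BMO$ you target. The product-type hypothesis (\ref{product Hormander class}) is weaker than the classical H\"ormander condition: a derivative $\partial_{\xi_1}^{\alpha_1}$ gains only $(1+|\xi_1|)^{-\rho}$, not $(1+|\xi|)^{-\rho}$, and on the support of the annulus cut-off $\phi_j$ (where $|\xi|\sim 2^j$) the coordinate block $|\xi_1|$ can be arbitrarily small compared with $2^j$. Consequently the anisotropic decay ``$|x_i-y_i|\gtrsim 2^{-\rho j}$ with loss $(1+|\xi|)^m$'' that you assert for $K_{j_1,j_2}$ does not follow from a single Littlewood--Paley decomposition. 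This is exactly why the paper introduces the additional cone decomposition $\delta_\ell(\xi)$ in the ratio $|\xi_2|/|\xi_1|$, localizing to $|\xi_1|\sim 2^{j-\ell}$, $|\xi_2|\sim 2^j$, and proves a key $L^\infty$ lemma with a geometric gain $2^{-\epsilon\ell}$ that makes the sum over cones converge. Some device of this kind is indispensable; without it your integration by parts in $\xi_1$ loses unbounded factors and neither the endpoint nor the far-from-diagonal atomic estimates close.
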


\begin{remark}
C. Fefferman originally proved the above theorem with symbols belonging to the classical Hörmander class $S^m_{\rho, \delta}$. Thus, the sharpness of the theorem follows from Fefferman's theorem, as $S^m_{\rho, \delta}\subset \S^m_{\rho,\delta}$.
\end{remark}
To prove Theorem \ref{multi-parameter pseduo}, it suffices to establish the following two estimates (\ref{key 1})-(\ref{key 2}), and then apply the complex interpolation theorem and consider the adjoint operator.
\begin{equation}\label{key 1}
\left \| T_\sigma f \right \|_{L^2(\R^n)} \le C \left \| f \right \|_{L^2(\R^n)},    \quad \sigma \in \S^0_{\rho, \delta}, \quad  0\le \delta <\rho <1,
\end{equation}
and 
\begin{equation}\label{key 2}
\quad \left \| T_\sigma f \right \|_{BMO(\R^n)} \le C \left \| f \right \|_{L^\infty(\R^n)}, \quad \sigma \in \S^{-n(1-\rho)/2}_{\rho, \delta}, \quad 0\le \delta < \rho <1.
\end{equation}
 
Here, $BMO(\mathbb{R}^n)$ denotes the class of functions of bounded mean oscillation defined by F. John and L. Nirenberg in \cite{JN61}. A locally integrable function $f$ on $\mathbb{R}^n$ belongs to $BMO$ if
\[
 ||f||_{BMO}=\sup_{Q}  \frac{1}{|Q|}\int_{Q}|f(x)-f_Q|dx <  \infty,
\]
where $Q$ is an arbitrary cube in $\mathbb{R}^n$ and $f_Q=\frac{1}{|Q|}\int_{Q}f(x)dx$.

{We} 
 will prove the $L^2$-boundedness of $T_\sigma$ with $\sigma$ of order $0$ in Section \ref{Section 2}, and $T_\sigma$ is bounded from $L^\infty$ to $BMO$ with $\sigma$ of order $-na/2$ in Section \ref{Section 3}. We primarily follow the proofs in \cite{S93} and \cite{F73} to establish estimates (\ref{key 1}) and (\ref{key 2}), respectively. However, a single Littlewood--Paley decomposition in the $\xi$-space is insufficient; we require a further cone decomposition to fully utilize the inequalities in (\ref{product Hormander class}).

\section{ {$L^2$}-Boundedness of $T_{\sigma}$ of Order 0 }\label{Section 2}
Since $ \S^0_{\rho,\delta} \subset  BS^{0, 0}_{\rho,  \delta}   \subset  BS^{0, 0}_{\delta,  \delta}$, it suffices to prove

\begin{lemma}\label{pseudo L^2 to L^2}
Suppose that $\sigma(x, \xi) \in BS^{0, 0}_{ \rho,  \rho}$, where  $0\le \rho<1$. Then the operator $T_\sigma$ defined in  (\ref{pseudo operator}) 
is bounded from $L^2(\R^n)$ to iteself.
\end{lemma}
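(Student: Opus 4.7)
The plan is to adapt the Cotlar--Stein almost-orthogonality proof of the classical Calder\'on--Vaillancourt theorem (following Stein \cite{S93}) to the bi-parameter setting, doing all decompositions at anisotropic scales dictated by $\xi_1$ and $\xi_2$ separately. The hypothesis $\sigma \in BS^{0,0}_{\rho,\rho}$ is essentially a product-type $S^0_{\rho,\rho}$ estimate in the two groups of variables, which suggests a tensor-product version of the classical argument.

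First, I would take a bi-parameter Littlewood--Paley partition of unity $\{\phi_{j_1}(\xi_1)\phi_{j_2}(\xi_2)\}_{j_1,j_2\ge 0}$ with $\phi_{j_i}$ supported on $|\xi_i|\sim 2^{j_i}$ for $j_i\ge 1$, and write $\sigma=\sum_{j_1,j_2}\sigma_{j_1,j_2}$ with $\sigma_{j_1,j_2}(x,\xi)=\sigma(x,\xi)\phi_{j_1}(\xi_1)\phi_{j_2}(\xi_2)$. On the support of $\sigma_{j_1,j_2}$ the hypothesis gives
\[
|\partial_\xi^\alpha\partial_x^\beta\sigma_{j_1,j_2}(x,\xi)|\le C_{\alpha,\beta}\prod_{i=1}^{2}2^{\rho j_i(|\beta_i|-|\alpha_i|)},
\]
so the natural smoothness scale in $\xi_i$ is $2^{\rho j_i}$ and in $x_i$ is $2^{-\rho j_i}$. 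Next, for each $(j_1,j_2)$ I would localize spatially by introducing smooth partitions of unity $\{\chi_{j,k}(x)\}$ and $\{\psi_{j,l}(\xi)\}$ adapted to anisotropic boxes: in $x_i$, cubes of side $2^{-\rho j_i}$ indexed by $k_i$; in $\xi_i$ (inside the annulus $|\xi_i|\sim 2^{j_i}$), cubes of side $2^{\rho j_i}$ indexed by $l_i$. This yields $\sigma=\sum_a\sigma_a$ and $T_\sigma=\sum_a T_a$ with $a=(j_1,j_2,k_1,k_2,l_1,l_2)$.

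The main step is to invoke the Cotlar--Stein lemma with this bi-parameter index set. The required almost-orthogonality quantities
\[
\omega(a,b):=\|T_a^*T_b\|_{L^2\to L^2}^{1/2}+\|T_aT_b^*\|_{L^2\to L^2}^{1/2}
\]
are estimated from the Schwartz kernel $K_a(x,y)=\int e^{2\pi i(x-y)\cdot\xi}\sigma_a(x,\xi)\,d\xi$ by repeated integration by parts: differentiating in $\xi$ against $e^{2\pi i(x-y)\cdot\xi}$ produces decay in $|x_i-y_i|$ measured in units of $2^{\rho j_i}$, while differentiating in $x$ produces decay in frequency separation. The product form of the estimates in (\ref{product Hormander class}) ensures that the $\xi_i$- and $x_i$-smoothness in each parameter $i=1,2$ can be exploited independently, so that $\omega(a,b)$ decays rapidly in each of the index differences $|j_i-j_i'|$, $|k_i-k_i'|$, $|l_i-l_i'|$. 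Then $\sup_a\sum_b\omega(a,b)$ factors into a product of convergent one-parameter series. When $|j_1-j_1'|$ is large the $\xi_1$-supports are disjoint and many of the mixed terms $T_a^*T_b$ vanish outright, leaving only easily controlled contributions.

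The main obstacle I expect is the bi-parameter Cotlar--Stein bookkeeping. One must check that all six index parameters admit summable off-diagonal decay \emph{simultaneously} and uniformly in the remaining ones, which requires that the integration-by-parts scheme not spend the smoothness in one variable at the cost of another. The product-type hypothesis (\ref{product Hormander class}) is precisely what makes this separation possible: it decouples the two directions, so the single-parameter Calder\'on--Vaillancourt-type estimates can be applied in tensor-product fashion. Once this decoupling is set up correctly, the remaining calculations are a routine elaboration of the classical argument.
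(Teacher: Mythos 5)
Your proposal is correct in substance and uses the same circle of ideas as the paper (Cotlar--Stein plus anisotropic localization at the scales $2^{\rho j_i}$ in $\xi_i$ and $2^{-\rho j_i}$ in $x_i$), but it organizes them differently. The paper runs a two-stage argument: it first proves the $\rho=0$ case by applying Cotlar--Stein to the symmetric form $Sf(x)=\int e^{2\pi i x\cdot\xi}f(\xi)\sigma(x,\xi)\,d\xi$ decomposed over unit lattice cubes in $x$ and $\xi$ separately in each parameter; then, for general $\rho$, it performs the bi-parameter Littlewood--Paley decomposition $T=\sum_j T_j$, splits into even and odd $j$ so that $T_jT_k^*=0$ exactly for $j\ne k$ within each parity class, obtains the $T_j^*T_k$ decay by integration by parts, and proves the uniform bound $\|T_j\|\le A$ by the anisotropic rescaling $\tilde\sigma_j(x,\xi)=\sigma_j(2^{-j\rho}x,2^{j\rho}\xi)$, which reduces each piece to the already-settled $\rho=0$ case. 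Your single Cotlar--Stein over the six-index family $a=(j_1,j_2,k_1,k_2,l_1,l_2)$ avoids the rescaling step but concentrates all the difficulty in the almost-orthogonality bookkeeping; the paper's factorization keeps each application of Cotlar--Stein essentially one-dimensional in its index and is cleaner to verify. One caution about your version: $\omega(a,b)$ does not literally decay in all six index differences simultaneously. Rather, $T_aT_b^*$ vanishes unless the frequency supports meet (pinning $j',l'$) and then decays in $|k-k'|$ via integration by parts in $\xi$, while $T_a^*T_b$ vanishes unless the spatial supports meet and then decays in $|l-l'|$ and in $|j-j'|$ (the latter with the net factor $2^{-(1-\rho)\max(j_i,j_i')N}$ per parameter, which is where $\rho<1$ enters); you must also account for the multiplicity of spatial boxes of side $2^{-\rho j_i'}$ meeting a fixed box of side $2^{-\rho j_i}$ when $j\ne j'$. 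These points are all manageable, so your route goes through, but they are exactly the complications the paper's rescaling is designed to sidestep.
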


\begin{proof}
 First we use the Cotlar--Stein lemma to show that the lemma is true in the case $\rho=0$.
 
By  Plancherel's theorem, we observe that it suffices to establish the $L^2$-boundedness of the operator $S$ defined by 
\[
Sf(x) = \int_{\R^n } e^{2\pi i x\cdot \xi } f(\xi) \sigma(x,\xi) d\xi. 
\]

{Notice} 
 that, in view of  the assumption of $\sigma$, the role of $x$ and $\xi$ in the above symbol class are perfectly symmetric. We choose a smooth non-negative function $\phi_i$ that is supported in the unit cube
\[
Q_1^i=\{x^i : ~|x_j^i| \le 1, ~~j=1,2,\dots, n_i \}, \qquad i=1,2
\]
and for which 
\[
\sum_{k^i \in \Z^{n_i}} \phi^i(x^i-k^i)=1.
\]

To construct such a $\phi^i$, simply fix any smooth, non-negative $\phi^i_0$ that equals $1$ on the cube $Q^{i}_{1/2} = 1/2 \cdot Q^i_1$ and is supported in $Q^i_1$. Noting that $\sum_{k^i \in. \Z^{ n_i} } \phi^i_0(x^i-k^i ) $ converges and is bounded away from $0$ for all $x\in \R^n$, we take
\[
\phi^i(x^i) =\phi^i_0(x^i) \left[ \sum_{l^i \in \Z^{n_i}} \phi_0^i(x^i-k^i) \right]^{-1}.
\] 

Next, let  $\vec k^i=(k^i, k'^i) \in \Z^{2n_i} =\Z^{n_i} \times \Z^{ n_i}$ denote an element of $\Z^{2n_i}$, and similarly write $\vec j^i=(j^i, j'^i ) $ for  another element of $\Z^{2 n_i}$. We set $\vec k=(\vec k^1,  \vec k^2 )$ and 
\[
\sigma_{\vec k} (x, \xi)=\left[ \prod_{i=1 }^2  \phi^i(x^i -k^i) \right] \sigma(x, \xi)  \left[ \prod_{i=1 }^2  \phi^i(\xi^i -k'^i) \right] = \phi(x-k)\sigma(x,\xi) \phi(\xi-k')
\] 
and 
\[
S_{\vec k}f(x) = \int_{\R^n } e^{2\pi  i x\cdot \xi } f(\xi)  \sigma_{\vec k} (x, \xi) d\xi. 
\]

Therefore, we have the decomposition
\[
S f(x) =\sum_{ \vec k\in \Z^{2n}} S_{\vec k} f(x).
\]

The main point is then to verify the almost-orthogonality estimates as follows:
\begin{equation}\label{almost orth 1}
||S^*_{\vec j} S_{\vec k}|| \le A \prod_{i=1}^2 (1+ |\vec j^i- \vec k^i |)^{ -2N_i}
\end{equation}
and
\begin{equation}\label{almost orth 2}
||S_{\vec k} S^*_{\vec j}|| \le A \prod_{i=1}^2 (1+ |\vec j^i- \vec k^i |)^{ -2N_i}
\end{equation}

Here, $||\cdot||$ denotes the $ L^2$ operator norm, $N_i$ is sufficiently large, and the bound $A$ is independent of $\vec k, \vec j$.   

Now, we can write 
\[
S^*_{\vec j } S_{\vec k} f(\xi) =\int_{\R^n} f(\eta) K_{\vec j, \vec k }(\xi, \eta) d \eta, 
\]
where
\[
K_{\vec j, \vec k }(\xi, \eta) = \int_{\R^n } \overline{\sigma}_{\vec j}(x,\xi ) \sigma_{\vec k}(x,\xi ) e^{2 \pi  i x\cdot (\eta- \xi) } dx.
\]

In the above integral, we integrate by parts, using the identities
\[
\prod_{i=1}^2  (I-\Delta_{x^i})^{N_i} e^{2\pi i x \cdot (\eta -\xi) } = \prod_{i=1}^2 (1+ 4 \pi ^2 |\eta^i- \xi^i|^2 )^{N_i} e^{2 \pi i x \cdot ( \eta- \xi) }.
\]

We also note that $\sigma_{\vec k}(x, \xi)$  and  $\sigma_{\vec j}(x, \eta)$  are given by
\[
\sigma_{\vec k} (x, \xi)=  \phi(x-k)\sigma(x,\xi) \phi(\xi-k'),\quad \sigma_{\vec j} (x, \xi)=  \phi(x-j)\sigma(x,\eta) \phi(\xi-j')
\] 
respectively, and so have disjoint $x$-support unless $\vec j^i-\vec k^i\in Q^i_1 $. These observations lead to the bounds
\[
\begin{array}{ lc} \displaystyle
|K_{\vec j, \vec k }(\xi, \eta)| \le  \prod_{ i=1} ^d \frac{ A_{N_i} \phi^i(\xi^i -\vec j'^i ) \phi^i(\eta^i -\vec k'^i )  }{(1+|\xi_i -\eta_i| )^{2N_i}}, ~~
\text{ if} ~~\vec j^i-\vec k^i\in Q^i_1, \quad i=1,2, 
\\\\ \displaystyle
|K_{\vec j, \vec k }(\xi, \eta)| =0, ~~~~~~~~~~~~~~~~~~~~~~~~~~~~~~~~~~~~~~~~~\text{otherwise}.
\end{array}
\]

Therefore, we have 
\[
\sup_{\xi} \int_{\R^n}|K_{\vec j, \vec k }(\xi, \eta)| d \eta <A \prod_{i=1}^2 (1+ |\vec j^i- \vec k^i |)^{ -2N_i}, 
\]
and
\[
\sup_{\eta} \int_{\R^n}|K_{\vec j, \vec k }(\xi, \eta)| d \xi < A \prod_{i=1}^2 (1+ |\vec j^i- \vec k^i |)^{ -2N_i},
\]
which implies  our desired estimate (\ref{almost orth 1}). Moreover, as we have noted, the situation is symmetric in $x$ and $\xi$, the same proof also shows the estimate (\ref{almost orth 2}). Now, it is only a matter of applying the Cotlar--Stein lemma; setting $N_i$ sufficiently large, we see 
\[
\sum_{\vec k \in \Z^n}  \prod_{i=1}^2 (1+ | \vec k^i |)^{ -2N_i} < \infty,  
\]
and as a result,  $S=\sum_{\vec k}S_{\vec k}$  is bounded from $ L^2(\R^n)$ to itself.

Now, we prove our Lemma \ref{pseudo L^2 to L^2}.

We start by defining a $C^\infty$ function $\varphi$ with compact support on $\mathbb{R}$, satisfying $\varphi(t)=1$ for $|t|\leq 1$ and $\varphi(t)=0$ for $|t|\geq 2$. For each $i=1,2$,  we set $\phi_0(\xi_i) =\varphi(|\xi_i|)$ and 
\[
\phi_{j_i}(\xi_i) = \varphi(2^{-j} |\xi_i| )-\varphi(2^{-j+1} |\xi_i| ), ~~ j_i\in \Z,~~j_i>0, ~~i=1,2,
\]
and 
\[
\phi_j(\xi) =\prod_{i=1}^2  \phi_{j_i} (\xi_i), \quad j\in \Z^2.
\]

Then, we define the partial operators 
\[
T_jf(x) =\int_{\R^n} e^{ 2\pi i x\cdot \xi } \hat{f}(\xi) \sigma_j(x,\xi) d\xi, \quad \sigma_j(x, \xi) =\sigma(x,\xi) \phi_j(\xi). 
\]

Let $\widehat{S_jf}(\xi)= \phi_j(\xi) \Hat{f}(\xi) $, and we have the decomposition of $T$
\begin{equation}\label{T decomposition}
 T=\sum_{j \ge 0} T_j =\sum_{j \ge 0} T S_j , \qquad \sum_{j \ge 0} = \prod_{i=1}^2 \sum_{j_i \ge 0}.
\end{equation}

It will be convenient to break the sum (\ref{T decomposition}) into two parts
\[
T= \sum_{ j~even}T_j + \sum_{j~odd }, \qquad ~\sum_{j~even} =\prod_{i =1}^2 \sum_{j_i \ge 0~ even },
\]
so that the summands in each parts have disjoint $\xi$-support; it suffices to prove the boundedness of each sum separately.

Let us consider the sum taken over the odd $j$. Note that 
\[
T_jT^*_k=TS_j(TS_k)^*= TS_jS^*_kT=0,\qquad j\neq k,
\]
because the supports of the multipliers corresponding to $S_j$ and $S_k$ are disjoint.
Next, we estimate $T^*_jT_k$, and we write
\[
T^*_jT_k f (x) = \int_{\R^n} K(x, y) f(y) dy,
\]
with 
\[
K(x,y) =\int_{\R^n\times\R^n\times \R^n}
\overline{\sigma}_k(z, \eta) \sigma_j(z, \xi) e^{2\pi i [\xi \cdot(z-y)-\eta \cdot(z-x) ] } dz d\eta d\xi.
\]

First, one carries integration by parts with respect to $z$-variable by writing
\[
\prod_{i=1}^2 \frac{(I- \Delta_{z_i} )^{N_i}}{(1+4 \pi^2|\xi_i -\eta_i|^2 )^{N_i}} e^{2\pi  (\xi-\eta )\cdot z }
=  e^{2\pi  i (\xi-\eta )\cdot z }.
\]

Next, one performs a similar process on the $\eta$-variable, beginning with 
\[
\prod_{i=1}^2 \frac{(I- \Delta_{\eta_i} )^{N_i}}{(1+4 \pi^2|x_i -z_i|^2 )^{N_i}} e^{2\pi i \eta\cdot (x-z )}
=  e^{2\pi  i \eta \cdot  (x-z) }.
\]

Finally, an analogous step is carried our for $\xi$-variable. If we take into account the differential inequalities for the symbols $\sigma_j$, and  the restrictions on their supports, we see that each order of differentiation in the $z_i$-variable gives us a factor of order
\[
(1+|\xi_i- \eta_i|)^{-1} \sim 2^{- \max\{k_i, j_i\}}
\]
for every factor of order 
\[
(1+|\xi_i|+ |\eta_i|)^{\rho} \sim 2^{\rho \max\{k-i, j_i \}}
\]
that may lose. As a result, the kernel $K$ is dominated by a constant multiple of 
\[
\prod_{ i=1}^2  2^{\max\{k_i, j_i \}(2\rho N_i-2N_i+2n_i )}  \int_{ \R^{n_i}} Q_i(x_i-z_i) Q_i(z_i -y_i) d z_i.
\]

Now, if we let $K_i(x_i,y_i) =\int_{ \R^{n_i}} Q_i(x_i-z_i) Q_i(z_i -y_i) d z_i  $, then
\[
\int_{\R^{n_i}} K_i(x_i, y_i) dy_i =\int_{\R^{n_i}} K_i(x_i, y_i) dx_i = \left(\int_{\R^{n_i} } (1+|z_i|)^{-2N_i}  \right)^2< \infty,
\]
if $2N_i >n_i$. Thus, we obtain 
\[
||T^*_jT_k ||\le A  \prod_{ i=1}^2 2^{\max\{k_i, j_i \}(2\rho N_i-2N_i+2n_i )}, \quad j\neq k,
\]
which implies that 
\[
||T_j^* T_k || \le \prod_{i=1}^2  \gamma_i(j_i) \gamma_i(k_i), \qquad j  \neq k,
\]
with $\gamma_i(j_i) =A\cdot 2^{ -\epsilon j_i}, \epsilon>0$, if we choose $N_i$ so large that $N_i>n_i(1-\rho)$.

In order to apply the Cotlar--Stein lemma, we need to show that the partial operators $T_j$  are uniformly bounded in the norm. To prove this, we set
\[
\Tilde{\sigma}_j(x, \xi)= \sigma_j(2^{-j\rho}x , 2^{j\rho} \xi),\quad 2^{-j \rho}x=(2^{j_1\rho }x_1, 2^{j_2\rho }x_2   ), ~~0\le \rho <1.
\]

Thus, $\Tilde{ \sigma}_j(x,\xi ) \in S^{0, 0}_{ \rho,  \rho}$ for $m_i=0, ~\rho_i=0$ for each $i=1,2,\dots,d$ uniformly in $j$ . Therefore,  the operator 
\[
\Tilde{T}_j f(x) =\int_{\R^n} e^{ 2\pi i x\cdot \xi } \hat{f}(\xi) \Tilde{\sigma}_j(x,\xi) d\xi
\]
is bounded on $L^2(\R^n)$. Next, define the scaling operators given by 
\[
\Lambda_j f( x) = f(2^{j\rho} x ) =f(2^{j_1\rho }x_1, 2^{j_2 \rho}x_2 ), 
\]
then, as is easily verified,
\[
T_j  =\Lambda_j \Tilde{T}_j \Lambda_j^{-1}.
\]

Now, $||\Lambda_j f||_{L^2} =\prod_{i=1}^2 2^{n_i j_i\rho/2} || f||_{L^2}$ and $||\Lambda^{-1}_j f||_{ L^2} =\prod_{i=1}^d 2^{-n_i j_i\rho/2} || f||_{L^2}$; so together with the $L^2$-boundedness of $\Tilde{T}_j$, we have 
\[
||T_j|| \le A, \quad \text{ uniformly in }~~j.
\]

We may therefore conclude that $\sum _{j ~odd }T_j$ is bounded from $L^2(\R^n)$ to itself, the sum $\sum_{j ~even}$ is treated similarly, and 
our Lemma \ref{pseudo L^2 to L^2} is {proved.}  

\end{proof}

\section{ {$L^p$}-Boundedness of {$T_{\sigma}$}  }\label{Section 3}
We make a further decomposition,  let $\xi=(\xi_1,\xi_2)\in \R^{n_1}\times\R^{n_2}$. 
Let 
 $\varphi$ be a smooth bump function on $\mathbb{R}$ such that
\begin{equation}\label{varphi}
\varphi(t)=1 \quad \textit{for} \quad |t|\le 1;  \quad \varphi(t)=0, \quad \textit{for}  \quad |t|\ge 2.
\end{equation}
Define 
\begin{equation}\label{phi_j}
\begin{array}{lc} \displaystyle
\phi_{j}(\xi) =\varphi(2^{-j}|\xi|) -\varphi(2^{-j+1}|\xi|) ,~~~j\in \mathbb{Z}, j>0;
 \quad 
\phi_{0} (\xi) = \varphi(|\xi|).
\end{array}
\end{equation}

 \begin{equation}
    \delta_\ell (\xi)= \varphi\left(2^{-\ell} \frac{|\xi_2|}{|\xi_1|}\right)-\varphi\left(2^{-\ell+1} \frac{|\xi_2|}{|\xi_1|}\right), \qquad \ell\in \Z.
\end{equation}
Note that  $\delta_\ell(\xi)$ has a support in the  cone  region,
\begin{equation}\label{cone}
    \Lambda_\ell=\{(\xi_1,\xi_2):2^{\ell-1}\le  \frac{|\xi_2|}{|\xi_1|}\le 2^{\ell+1}\}.
\end{equation}

By symmetry, we can always assume $\ell$ is a  non-negative integer.  Now for fixed $j$, we make a  cone decomposition in the frequency space, define partial operators
 \begin{equation}
  T_{\ell j} f(x)=  T_{\sigma_{\ell j}}f(x)=\int_{\R^n} e^{2\pi i x\cdot\xi} \Hat{f}(\xi) \sigma_{\ell j}(x,\xi)  d\xi, \quad 
     \sigma_{\ell j}(x,\xi)=\sigma(x,\xi) \phi_{j}(\xi) \delta_\ell(\xi).
 \end{equation}
Furthermore,  we define
 \begin{equation}
  T^\flat_{j} f(x)=  T_{\sigma^\flat_{j}}f(x)=\int_{\R^n} e^{2\pi i x\cdot\xi} \Hat{f}(\xi) \sigma^\flat_{ j}(x,\xi)  d\xi, \quad 
     \sigma^\flat_{ j}(x,\xi)=\sum_{\ell =j} ^\infty \sigma(x,\xi) \phi_{j}(\xi) \delta_\ell(\xi),
 \end{equation}
and 
 \begin{equation}
  T^\sharp_{j} f(x)=  T_{\sigma^\sharp_{j}}f(x)=\int_{\R^n} e^{2\pi i x\cdot\xi} \Hat{f}(\xi) \sigma^\sharp_{ j}(x,\xi)  d\xi, \quad 
     \sigma^\sharp_{j}(x,\xi)= \sum_{\ell =0}^j\sigma(x,\xi) \phi_{j}(\xi) \delta_\ell(\xi).
 \end{equation}

\subsection{A Key Lemma}
Let  a symbol $ \sigma(x,\xi)\in \S^m_{\rho, \delta}$, then we define its norm as  

\begin{equation}
    \left\|\sigma \right\|_\S=\sup_{|\alpha|\le k, |\beta|\le N} |\partial_\xi^\alpha \partial_x^\beta \sigma(x,\xi)|
    (1+|\xi|)^{-m}
    \prod_{i=1}^2
    \left( 1+|\xi_i|  \right)^{\rho|\alpha_i|-\delta |\beta_i|} ,~~k,N>n/2.
\end{equation}
 Let $r>0$ be a real number,  recall the definitions of $\phi_j(\xi)$ and $\delta_\ell(\xi)$,  define the partial~operators
 \begin{equation}
  T_{\ell j}^r  f(x)= T_{\sigma^r_{\ell j}}f(x)=\int_{\R^n} e^{2\pi i x\cdot\xi} \Hat{f}(\xi) \sigma^r_{\ell j}(x,\xi)  d\xi, \quad 
     \sigma^r_{\ell j}(x,\xi)=\sigma(x,\xi)\delta_\ell(\xi) \phi_{j}(r\xi),
 \end{equation}
 and
 \begin{equation}
T_j^rf(x) =    T_{\sigma^r_{ j}}f(x)=\int_{\R^n} e^{2\pi i x\cdot\xi} \Hat{f}(\xi) \sigma^r_{ j}(x,\xi)  d\xi, \quad 
     \sigma^r_{ j}(x,\xi)= \sum_{\ell\ge 0}\sigma^r_{\ell j}(x,\xi).
 \end{equation}

 \begin{lemma}\label{key lemma}
     Let the symbol $\sigma(x,\xi)$ be defined as (\ref{product Hormander class}),  and  $T_{\ell j}^r,~T_j^r$ defined as above,  then  we have
     \begin{equation}
         \begin{array}{cc}\ds
      \left\| T_j^r f \right\|_{L^\infty}\le  C   ||\sigma||_\S||f||_{L^\infty},
      \\\\ \ds
           \left\| T_{\ell j}^r f \right\|_{L^\infty}\le  C 2^{-n_1\ell/2}||\sigma||_\S||f||_{L^\infty}.
         \end{array}
     \end{equation}
     
Moreover, let $2^k\le r^{-1}<2^{k+1}$,  if $\sigma_0(x,\xi)=\sum_{j\le -k}\sigma^r_{j}(x,\xi)$, we have
\begin{equation}
            \left\| T_{\sigma_0} f \right\|_{L^\infty}\le  C ||\sigma||_\S||f||_{L^\infty}.
\end{equation}
 \end{lemma}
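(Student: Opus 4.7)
The plan is to prove all three bounds by estimating the $L^1$-norm of the integral kernel of each operator. Writing
\[
T^r_{\ell j}f(x) = \int_{\R^n} K^r_{\ell j}(x,y)\,f(y)\,dy,\qquad K^r_{\ell j}(x,y) = \int_{\R^n} e^{2\pi i (x-y)\cdot \xi}\,\sigma^r_{\ell j}(x,\xi)\,d\xi,
\]
the first two estimates reduce to bounds of the form $\sup_x \|K^r_{\ell j}(x,\cdot)\|_{L^1_y}\le C\|\sigma\|_\S\, 2^{-n_1 \ell/2}$ and its sum in $\ell$; the third reduces in the same way after isolating the fact that $\sigma_0$ has Fourier support in $\{|\xi|\lesssim 1\}$. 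On the support of $\sigma^r_{\ell j}$ I have $|\xi_2|\sim A_2 := 2^j/r$ and $|\xi_1|\sim A_1 := 2^{j-\ell}/r$, so the $\xi$-volume of the support is $A_1^{n_1}A_2^{n_2}$. The cutoffs $\delta_\ell$ and $\phi_j(r\cdot)$ are of Mihlin type, so Leibniz combined with the symbol-class bound built into $\|\sigma\|_\S$ will yield
\[
|\partial^\alpha_\xi \sigma^r_{\ell j}(x,\xi)|\le C\|\sigma\|_\S (1+|\xi|)^m\prod_{i=1}^2 (1+|\xi_i|)^{-\rho|\alpha_i|}
\]
on the support.

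The core step is a weighted Cauchy--Schwarz applied to $K^r_{\ell j}(x,x+z)$, which equals the inverse Fourier transform in $\xi$ of $\sigma^r_{\ell j}(x,\cdot)$ evaluated at $z$. Introducing the weight $w(z)=\prod_{i=1}^2 (1+A_i^\rho|z_i|)^{2N_i}$ with $2N_i>n_i$, I would use
\[
\|K^r_{\ell j}(x,\cdot)\|_{L^1}^2 \le \Big(\int_{\R^n} w^{-1}(z)\,dz\Big)\cdot\Big(\int_{\R^n} |K^r_{\ell j}(x,x+z)|^2 w(z)\,dz\Big).
\]
The first factor evaluates to $C\prod_i A_i^{-\rho n_i}$, and the second, by Plancherel in $z$, expands as a sum of weighted $L^2_\xi$-norms of $\partial^\alpha_\xi \sigma^r_{\ell j}$ with $|\alpha_i|\le N_i$. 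The weight has been chosen so that the amplification $\prod_i A_i^{2\rho|\alpha_i|}$ it contributes balances the loss $\prod_i A_i^{-2\rho|\alpha_i|}$ coming from the derivative bounds, leaving only the symbol-norm factor times the support volume $A_1^{n_1}A_2^{n_2}$. Combining the two factors and substituting $A_1/A_2 = 2^{-\ell}$, the critical order choice for the symbol class forces the $j,r$-dependence to cancel, leaving the claimed exponential decay in $\ell$; summing the geometric series $\sum_{\ell\ge 0} 2^{-n_1 \ell/2}$ then delivers the bound for $T^r_j$.

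For $T_{\sigma_0}$, the assumption $2^k\le r^{-1}<2^{k+1}$ together with $j\le -k$ gives $2^j/r\le 1$, so the Fourier support of each $\sigma^r_j$ is contained in $\{|\xi|\lesssim 1\}$; the summed symbol then has compact frequency support with derivatives uniformly bounded by $\|\sigma\|_\S$, so its kernel is Schwartz in $y-x$ with $L^1$-norm bounded directly by $C\|\sigma\|_\S$. The hardest part is the calibration of the weight $w$: the exponent $\rho$ in $A_i^\rho|z_i|$ must be matched to the bi-parametric decay rate $(1+|\xi_i|)^{-\rho|\alpha_i|}$ of the symbol rather than the isotropic one-parameter rate $(1+|\xi|)^{-\rho|\alpha|}$, so that after Plancherel there is no residual loss in $j$ or $r$ at the critical order and the exponential decay in the cone parameter $\ell$ stays sharp; the cone cutoff $\delta_\ell$ contributes only a Mihlin $|\xi_i|^{-|\alpha_i|}$ factor which is dominated by the symbol decay and therefore does not spoil the estimate.
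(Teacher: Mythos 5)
Your proposal follows the same overall route as the paper: reduce each bound to $\sup_x\|\widehat{\sigma^r_{\ell j}}(x,\cdot)\|_{L^1(\R^n)}$, estimate that $L^1$ norm by Cauchy--Schwarz against a polynomial weight followed by Plancherel, use the frequency localization $|\xi_1|\sim 2^{j-\ell}r^{-1}$, $|\xi_2|\sim 2^{j}r^{-1}$ to supply the volume factor, and treat $\sigma_0$ by the same argument on $|\xi|\lesssim 1$. The one genuine difference is the choice of weight. The paper splits isotropically at the single radius $b=(2^jr^{-1})^{-\rho}$ and, on the outer region, pairs $|y|^{2k}$ with $\nabla_\xi^{k}\sigma^r_{\ell j}$; since a $\xi_1$-derivative only gains $(1+|\xi_1|)^{-\rho}\sim 2^{\ell\rho}b$ rather than $b$, that outer estimate taken literally produces a loss $2^{\ell\rho k}$ against the gain $2^{-n_1\ell/2}$, so the paper's ``similarly'' for the $\ell$-refined bound glosses over exactly the point your product weight $w(z)=\prod_i(1+A_i^{\rho}|z_i|)^{2N_i}$ fixes: calibrating the decay scale in $z_i$ to the block frequency $A_i$ makes the derivative losses cancel block by block. (This is in fact how the paper argues in its second key lemma, via the anisotropic regions $Q_{\U}$.) The price is quantitative: your computation yields $\|K^r_{\ell j}(x,\cdot)\|_{L^1}\le C\|\sigma\|_{\S}\,2^{-(1-\rho)n_1\ell/2}$ rather than the stated $2^{-n_1\ell/2}$, since the support volume contributes $\prod_iA_i^{(1-\rho)n_i/2}$ against $R^{m}$ with $m=-n(1-\rho)/2$; this is weaker than the lemma's constant but still geometric in $\ell$ and suffices both for summing over $\ell$ and for every subsequent use of the lemma. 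One caveat you share with the paper: when $\ell$ is so large that $|\xi_1|\lesssim 1$, the Mihlin bound $|\partial^{\alpha_1}_{\xi_1}\delta_\ell|\lesssim|\xi_1|^{-|\alpha_1|}$ is no longer dominated by $(1+|\xi_1|)^{-\rho|\alpha_1|}$, and one should instead take no $\xi_1$-derivatives and use the $O(1)$ measure of the $\xi_1$-support; this regime deserves a sentence but causes no real difficulty.
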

\begin{proof}
    
We denote $\Hat{\sigma}(x,y)=\int_{\R^n} e^{-2\pi  i y\cdot\xi} \sigma(x,\xi)d\xi $ throughout this paper. Now write 
\begin{equation}
    T_{j}^rf(x)=\int_{\R^n} f(y) \Hat{\sigma}_j^r(x,y-x)dy.
\end{equation}

We see that $|T_{j}^r f|\le ||\Hat{\sigma^r_j}(x,\cdot)||_{L^1}||f||_{L^\infty}$, where $||\Hat{\sigma_j^r}(x,\cdot)||_{L^1
}=\int_{\R^n}|\Hat{\sigma}^r_j(x,y)|dy$. Therefore, it suffices to show that $||\Hat{\sigma}^r_j(x,\cdot)||_{L^1}\le   C ||\sigma||_\S$,
~$||\Hat{\sigma}^r_{\ell j}(x,\cdot)||_{L^1}\le  C  2^{-n_1\ell/2} ||\sigma||_\S$ and 
$||\Hat{\sigma}_0(x,\cdot)||_{L^1}\le  C ||\sigma||_\S$ .
Let us consider $T_j^r$, let $b=(2^j r^{-1})^{a-1}$.  Applying the Cauchy--Schwartz inequality and Plancherel theorem we see
\begin{equation}
    \begin{array}{cc}\ds
\int_{|y|<b } |\Hat{\sigma^r_{ j}}(x,y)|dy\le  C  b^{n/2} \left( \int_{|y|<b } |\Hat{\sigma^r_{ j}}(x, y)|^2dy\right)^{\frac{1}{2}}
\le  C b^{n/2}\left( \int_{\R^n } |\sigma^r_{j}(x,\xi)|^2d\xi\right)^{\frac{1}{2}}
\\\\ \ds 
\le  C  ||\sigma||_\S \qquad (\textit{since $\sigma^r_{ j}$ lives in $|\xi|\sim 2^j r^{-1}$} ),
    \end{array}
\end{equation}
and 
\begin{equation}
    \begin{array}{cc}\ds
\int_{|y|\ge b } |\Hat{\sigma}^r_j(x,y)|dy\le C  b^{n/2-k} \left( \int_{|y|\ge b } |y|^{2k}|\Hat{\sigma}^r_j (x, y)|^2dy\right)^{\frac{1}{2}}
\\\\ \ds
\le C   b^{n/2-k} \left( \int_{\R^n } |\nabla_\xi^{k}  \sigma^r_{ j}(x,\xi)|^2d\xi\right)^{\frac{1}{2}}
\le C  ||\sigma||_\S \quad
\\\\ \ds(\textit{since $\sigma$ lives in $|\xi|\sim 2^j r^{-1}$}, k>n/2 ).
    \end{array}
\end{equation}

Thus, $||\Hat{\sigma^r_{j}}(x,\cdot)||_{L^1}\le C   ||\sigma||_\S$. 

Similarly, we can prove ~$||\Hat{\sigma}^r_{\ell j}(x,\cdot)||_{L^1}\le C2^{-n_1\ell/2} ||\sigma||_\S$ and 
~$||\Hat{\sigma}_0(x,\cdot)||_{L^1}\le C  ||\sigma||_\S$  once we   note that $\sigma_{\ell j}^r$  supported in the region $|\xi_1|\sim 2^{j-\ell},~|\xi_2|\sim 2^j$ and $\sigma_0$ supported in the region $|\xi|\le 1$. Therefore, we have proven  Lemma \ref{key lemma}.
\end{proof}

\subsection{Proof of the Main Theorem}\label{sec13}
Due to the complex interpolation theorem and the discussion of the adjoint operator, it suffices to prove the following Lemma \ref{BMO}.
\begin{lemma}\label{BMO}
Suppose  $\sigma(x, \xi) \in \S^{-n(1-\rho)/2}_{\rho, \delta}$,  $0 \le \delta< \rho<1$, 
then pseudo-differential operator $T_\sigma$  is a bounded operator from  $L^\infty(\R^n)$ to  $BMO(\R^n)$.
\end{lemma}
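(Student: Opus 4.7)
The plan is to adapt Fefferman's original BMO argument for the classical Hörmander class to the product setting by inserting the cone decomposition from this section. Fix a cube $Q\subset\R^n$ centered at $x_0$ with side $r$, and let $k\in\Z$ satisfy $2^k\le r^{-1}<2^{k+1}$. The goal is to exhibit a constant $c_Q$ with $\tfrac{1}{|Q|}\int_Q|T_\sigma f-c_Q|\,dx\le C\|f\|_\infty$. Apply Lemma \ref{key lemma} with parameter $r$ to split $\sigma=\sigma_0+\sigma^{\mathrm h}$, where $\sigma_0=\sum_{j\le -k}\sigma^r_j$ covers the piece with $|\xi|\lesssim 1$ and $\sigma^{\mathrm h}=\sum_{j>-k}\sigma^r_j$. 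Lemma \ref{key lemma} directly gives $\|T_{\sigma_0}f\|_{\infty}\le C\|\sigma\|_{\S}\|f\|_\infty$, which is harmless for the oscillation, so the focus is on $T_{\sigma^{\mathrm h}}$.

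Write $f=f_1+f_2$ with $f_1=f\chi_{Q^*}$ for a fixed dilate $Q^*$ of $Q$. Since $\sigma^{\mathrm h}\in\S^{-n(1-\rho)/2}_{\rho,\delta}\subset\S^0_{\rho,\delta}$, the $L^2$-boundedness (Lemma \ref{pseudo L^2 to L^2}) combined with Cauchy--Schwarz yields
\[
\Bigl(\tfrac{1}{|Q|}\int_Q|T_{\sigma^{\mathrm h}}f_1|^2\,dx\Bigr)^{1/2}\le|Q|^{-1/2}\|T_{\sigma^{\mathrm h}}f_1\|_{L^2}\le C|Q|^{-1/2}|Q^*|^{1/2}\|f\|_\infty\le C\|f\|_\infty.
\]
For the remote piece, take $c_Q=T_{\sigma^{\mathrm h}}f_2(x_0)$; since $f_2$ vanishes on $Q^*$,
\[
|T_{\sigma^{\mathrm h}}f_2(x)-c_Q|\le\|f\|_\infty\int_{(Q^*)^c}|K^{\mathrm h}(x,y)-K^{\mathrm h}(x_0,y)|\,dy,
\]
where $K^{\mathrm h}$ is the kernel of $T_{\sigma^{\mathrm h}}$. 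The problem reduces to the Hörmander-type kernel condition $\sup_{x\in Q}\int_{(Q^*)^c}|K^{\mathrm h}(x,y)-K^{\mathrm h}(x_0,y)|\,dy\le C$.

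Decompose $K^{\mathrm h}=\sum_{j>-k}\sum_{\ell\ge 0}K^r_{\ell j}$ into cone pieces; recall that $\sigma^r_{\ell j}$ is supported where $|\xi_2|\sim 2^jr^{-1}$ and $|\xi_1|\sim 2^{j-\ell}r^{-1}$. On each piece I combine two estimates. A trivial bound from Lemma \ref{key lemma} gives $\int|K^r_{\ell j}(x,y)|\,dy\le C\,2^{-n_1\ell/2}$, hence $\int|K^r_{\ell j}(x,y)-K^r_{\ell j}(x_0,y)|\,dy\le 2C\,2^{-n_1\ell/2}$. A smoothness bound uses the mean-value theorem in $x$: $\nabla_xK^r_{\ell j}(\cdot,y)$ is the kernel of the operator with symbol $\partial_x\sigma^r_{\ell j}+2\pi i\xi\sigma^r_{\ell j}$, whose $\xi$-factor has $L^\infty$-size $\sim 2^jr^{-1}$; applying Lemma \ref{key lemma} to this modified symbol, together with $|x-x_0|\le\sqrt n\,r$, yields $\int|K^r_{\ell j}(x,y)-K^r_{\ell j}(x_0,y)|\,dy\le C\,2^j\cdot 2^{-n_1\ell/2}$. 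To gain additional decay in $j$ from the $(Q^*)^c$-region, integrate by parts in $\xi$, which produces factors $(1+|\xi_i||y_i-x_i|)^{-N_i}$ at a cost of $(1+|\xi_i|)^{-\rho}$ per derivative. Taking the minimum of the two bounds and summing, the $2^{-n_1\ell/2}$ factor gives convergence in $\ell$ and the crossover between trivial and smooth regimes yields a geometric sum in $j$.

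The principal technical obstacle is the $j$-summation at the critical order $m=-n(1-\rho)/2$, where every $L^1$-type bound on the pieces $K^r_{\ell j}$ is only borderline finite. The requisite decay must be extracted from the interplay $\rho>\delta$: each extra $\xi_i$-integration by parts gains $|y_i-x_i|^{-1}(1+|\xi_i|)^{-\rho}$, while each $x$-derivative of the symbol costs only $(1+|\xi_i|)^{\delta}$, so the smoothness in $x$ can be absorbed with a net gain whenever $\delta<\rho$. The cone parameter $\ell$ plays an essentially orthogonal role, supplying the product-type summable factor $2^{-n_1\ell/2}$ that has no analogue in Fefferman's single-parameter argument; this is precisely where the extra cone decomposition beyond the standard Littlewood--Paley cut is forced upon us by the product-type symbol class $\S^m_{\rho,\delta}$. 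Assembling these estimates establishes the Hörmander condition and therefore Lemma \ref{BMO}.
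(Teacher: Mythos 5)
Your reduction of the high-frequency part to the H\"ormander kernel condition $\sup_{x\in Q}\int_{(Q^*)^c}|K^{\mathrm h}(x,y)-K^{\mathrm h}(x_0,y)|\,dy\le C$ is where the argument breaks, and the interplay $\rho>\delta$ that you invoke to close it is not the relevant mechanism (the obstruction below is already present for $x$-independent symbols, where $\delta$ plays no role). Track your own three bounds on the piece at frequency $|\xi|\sim R_j=2^jr^{-1}$. The trivial bound gives $O(1)$ per $j$ after summing the cone factor $2^{-n_1\ell/2}$ over $\ell$. The mean-value bound gives $O(R_j\cdot r)=O(2^j)$, which is useless for $j\ge1$. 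Integration by parts in $\xi$ yields decay of the form $(1+R_j^{\rho}|y-x|)^{-N}$ (each derivative costs $(1+|\xi|)^{-\rho}$, not $(1+|\xi|)^{-1}$), so it gains nothing until $|y-x|\gg R_j^{-\rho}$; on $(Q^*)^c$ with $Q^*$ a \emph{fixed} dilate of $Q$ you only know $|y-x_0|\gtrsim r$, and $r\le R_j^{-\rho}$ precisely when $2^j\le r^{-(1-\rho)/\rho}$. Hence for every $j$ in the range $1\le j\le k(1-\rho)/\rho$ --- about $(1-\rho)\rho^{-1}\log_2(1/r)$ dyadic blocks --- none of your estimates improves on the trivial $O(1)$, and the resulting bound for the H\"ormander integral is $O(\log(1/r))$, which is unbounded as $r\to0$. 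This loss is not an artifact: for model critical symbols such as $|\xi|^{-n(1-\rho)/2}e^{i|\xi|^{1-\rho}}$ each of these blocks genuinely contributes a fixed amount (the kernel piece at frequency $R_j$ carries unit mass on the shell $|y-x|\sim R_j^{-\rho}\gg r$ and oscillates at wavelength $R_j^{-1}\ll r$, so translating $x$ by $r$ produces an $O(1)$ change), so the kernel of a critical-order exotic operator is not a Calder\'on--Zygmund kernel and the condition you reduce to is false. A structural symptom of the same problem is that your proof never uses the critical order $m=-n(1-\rho)/2$ quantitatively --- the local $L^2$ step works for any $m\le0$ --- whereas the theorem is sharp at exactly that order.

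This is precisely the difficulty Fefferman's argument, which the paper follows, is designed to avoid. After removing the frequencies $|\xi|\le 2r^{-1}$ (handled via the gradient bound $\|\nabla T_{\sigma^0}f\|_{L^\infty}\lesssim r^{-1}\|f\|_{L^\infty}$), one does not split $f$ at all. Instead one multiplies by a band-limited $\lambda$ with $\lambda\ge1$ on $Q$ and $\hat\lambda$ supported in $|\xi|\le r^{-\rho}$, writes $\lambda\,T_{\sigma^1}f=T_{\sigma^1}(\lambda f)+[\lambda,T_{\sigma^1}]f$, controls the first term in $L^2(Q)$ by factoring through $G_{\pm n(1-\rho)/4}$ --- this is exactly where the order $-n(1-\rho)/2$ is spent, since $\sigma^1(x,\xi)(1+|\xi|^2)^{n(1-\rho)/4}\in\S^0_{\rho,\delta}$ is then $L^2$-bounded --- and controls the commutator in $L^\infty$ via the cone decomposition and the $2^{-\epsilon\ell}$, $2^{-\epsilon' j}$ gains of Lemma 3.5. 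If you insist on a kernel-based route you must at least replace $Q^*$ by the anisotropic enlargement of side $\sim r^{\rho}$ dictated by the $(\rho,\delta)$ uncertainty principle; but then $\|f\chi_{Q^*}\|_{L^2}^2\sim r^{n\rho}\gg|Q|$ and the local piece forces you to exploit the order gain on $L^2$ exactly as in the commutator argument. The two difficulties cannot be decoupled, so the proposal as written does not prove the lemma.
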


Consistent with the multi-parameter Fourier integral operators, and using the notation from the previous chapter, we define the partial operators
 \begin{equation}
  T_{j \ell } f(x)=  T_{\sigma_{j \ell }}f(x)=\int_{\R^n} e^{2\pi i x\cdot\xi} \widehat{f}(\xi) \sigma_{j \ell }(x,\xi)  d\xi, \quad 
     \sigma_{j \ell }(x,\xi)=\sigma(x,\xi) \phi_{j \ell }(\xi).
 \end{equation}
 \begin{equation}
  T_{ j} f(x)=  T_{\sigma_{j}}f(x)=\int_{\R^n} e^{2\pi i x\cdot\xi} \widehat{f}(\xi) \sigma_{j}(x,\xi)  d\xi, \quad 
     \sigma_{j}(x,\xi)=\sigma(x,\xi) \phi_{j}(\xi).
 \end{equation}

 \begin{lemma}\label{key lemma}
   Suppose  $\sigma(x,\xi) \in   \S^{-n(1-\rho)/2}_{\rho, \delta} $,      For the operator  $T_{j \ell}$ as defined above,  
there exists $\epsilon>0$  such that 
        \begin{equation}
      \left\| T_{j \ell } f \right\|_{L^\infty(\R^n) }\le  C \prod_{i=1}^{d-1} 2^{- \epsilon  \ell_i }  \left\| \sigma \right\|_\S \left\| f \right \|_{L^\infty(\R^n)},  
     \end{equation}
     \begin{equation}
      \left\| T_j f \right\|_{L^\infty(\R^n)}\le  C  \left\| \sigma \right\|_\S  \left\| f  \right \|_{L^\infty(\R^n)}, \quad j\ge 0.
     \end{equation}
 \end{lemma}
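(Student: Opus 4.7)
The plan is to reduce both estimates to $L^1$-bounds on the integral kernels and then run a Cauchy--Schwarz + Plancherel argument on appropriately scaled boxes, directly mirroring the preceding key lemma at $r=1$ but now exploiting the specific order $m=-n(1-\rho)/2$. Writing $T_{j\ell}f(x)=\int_{\R^n}\hat\sigma_{j\ell}(x,x-y)f(y)\,dy$, we have $\|T_{j\ell}f\|_{L^\infty}\le\sup_x\|\hat\sigma_{j\ell}(x,\cdot)\|_{L^1}\|f\|_{L^\infty}$, and analogously for $T_j$. It therefore suffices to show
\[
\sup_x\|\hat\sigma_{j\ell}(x,\cdot)\|_{L^1(\R^n)}\;\le\;C\,2^{-\epsilon\ell}\|\sigma\|_\S,\qquad \sup_x\|\hat\sigma_j(x,\cdot)\|_{L^1(\R^n)}\;\le\;C\,\|\sigma\|_\S,
\]
for some $\epsilon>0$.

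For the decaying estimate I would first record that $\sigma_{j\ell}$ is supported where $|\xi_1|\sim 2^{j-\ell}$ and $|\xi_2|\sim 2^j$, so its support has volume $V\sim 2^{jn-\ell n_1}$ and $|\sigma_{j\ell}|\lesssim\|\sigma\|_\S 2^{-jn(1-\rho)/2}$. Split the $y$-integration into a central anisotropic box $\{|y_1|<b_1,\,|y_2|<b_2\}$ and its complement, with scales $b_i$ to be fixed at the end. On the central box, Cauchy--Schwarz followed by Plancherel yields
\[
\int|\hat\sigma_{j\ell}|\,dy\;\lesssim\;b_1^{n_1/2}b_2^{n_2/2}\|\sigma_{j\ell}(x,\cdot)\|_{L^2}\;\lesssim\;b_1^{n_1/2}b_2^{n_2/2}\|\sigma\|_\S\,2^{jn\rho/2-\ell n_1/2}.
\]
On the exterior regions, integrate by parts in each $\xi_i$-variable to gain $b_i^{n_i/2-k_i}$ against $\|\partial_{\xi_i}^{k_i}\sigma_{j\ell}\|_{L^2}$, controlled via the product-type estimates $|\partial_{\xi_i}^{\alpha_i}\sigma_{j\ell}|\lesssim\|\sigma\|_\S 2^{jm}(1+|\xi_i|)^{-\rho|\alpha_i|}$. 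Choosing $b_1=2^{-\rho(j-\ell)}$ and $b_2=2^{-\rho j}$ --- the $\rho$-smoothness scales of the two variables --- balances all four pieces, and the factor $2^{jn\rho/2}$ is exactly absorbed by $b_1^{n_1/2}b_2^{n_2/2}$, leaving $\|\sigma\|_\S\,2^{-\epsilon\ell}$ for a positive $\epsilon$.

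The uniform bound on $T_j$ is obtained either by running the same scheme directly on $\sigma_j$ (no cone cutoff, so $V\sim 2^{jn}$ and no $\ell$-factor appears, yielding a uniform constant), or by summing $T_j=\sum_{\ell\ge 0}T_{j\ell}$ using the geometric $\ell$-decay. The main obstacle is the tight calibration of $b_1,b_2$: the choice succeeds precisely because the order $m=-n(1-\rho)/2$ is exactly matched to the $\rho$-smoothness, so that after the Cauchy--Schwarz balancing the powers of $2^j$ cancel; any other order would leave a nontrivial $j$-growth. A secondary subtlety is that differentiating the cone cutoff $\delta_\ell$ in $\xi_1$ yields an unfavorable factor $\sim 2^{\ell-j}$; one must verify this is dominated by the symbol's $\rho$-loss $2^{-\rho(j-\ell)}$, which holds whenever $\rho<1$ on the range $\ell\le j$, while the complementary range $\ell\ge j$ (where $|\xi_1|\le 1$) is treated separately using the coarser pointwise bound on $\sigma$.
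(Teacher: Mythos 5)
Your proposal is correct and follows essentially the same route as the paper: reduce to $\sup_x\|\hat\sigma_{j\ell}(x,\cdot)\|_{L^1}$, split $y$-space into anisotropic boxes at the $\rho$-smoothness scales of each factor, and apply Cauchy--Schwarz plus Plancherel with polynomial weights on the exterior regions, the $2^{-\epsilon\ell}$ decay coming from the reduced volume $2^{jn-\ell n_1}$ of the cone support. The only (harmless) difference is your choice of box sides $b_i=2^{-\rho(j-\ell_i)}$ versus the paper's $2^{-\rho j}2^{\gamma\ell_i}$ with $\rho<\gamma<1$, and your explicit handling of derivatives of the cone cutoff $\delta_\ell$ and of the range $\ell\ge j$ is, if anything, more careful than the paper's.
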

\begin{proof}
Let 
\[
\widehat{\sigma_{j \ell }}(x, y)=\int_{\R^n} e^{-2\pi  i y\cdot\xi} \sigma_{j \ell }(x,\xi)d\xi.
\]
Then we can express $T_{j \ell }$ as 
\begin{equation}
    T_{j\ell }f(x)=\int_{\R^n} f(y) \widehat{\sigma_{j \ell }}(x, y-x)dy.
\end{equation}
Then,
\[
\left \|T_{j \ell } f \right \|_{L^\infty(\R^n)} \le \left \|\widehat{\sigma_{j \ell }}(x,\cdot) \right \|_{L^1(\R^n)} \left\| f \right\|_{L^\infty(\R^n)}, 
\]
where 
\[
\left \| \widehat{\sigma_{j \ell}}(x,\cdot) \right \|_{L^1(\R^n)}=\int_{\R^n} \left |\widehat{\sigma_{j \ell}}(x, y) \right |dy.
\]
According to the definition, we have  $|\xi| \sim 2^{j}, |\xi_i |\sim 2^{j -\ell_i}, 1\le i \le d $.  
 
Let  $\U$ and  $\W$  be two disjoint subsets of  $ \{ 1,2, \dots, d\}$  such that $\U \cup \W = \{1, 2, \dots,d \}$. Define 
\[
Q= Q_{\U}= \{ y \in \R^n: |x_i| <  2^{- \rho j} \cdot 2^{\gamma  \ell_i }, i\in \U;  |x_i| \ge 2^{-\rho j}\cdot 2^{\gamma \ell_i }, i  \in \W \}.
\]
Thus, we can decompose $\R^n$  as $\R^n = \bigcup_{\U} Q_\U, $
where  $\bigcup_{\U}$ denotes the union over all subsets $U$ of  $\{1, 2, \dots, d\}$.
 Applying the Cauchy-Schwarz inequality and the Plancherel theorem, and assuming  $|\alpha_i| > n_i/2, i\in \W$, we have 
\begin{equation}
    \begin{array}{lc}\displaystyle
\int_{Q }  \left |\widehat{\sigma_{ j\ell  }}(x, y) \right |dy
=\int_{Q }  \prod_{i\in \W} |y_i|^{|\alpha_i|}  |y_i|^{ -|\alpha_i|}   \left  |\widehat{\sigma_{ j\ell  }}(x, y) \right |dy
\\\\ \displaystyle
\le  C  \left\{   \int_{Q}  \prod_{i\in \W} |y_i|^{ -2|\alpha_i|} dy\right \}^{\frac{1}{2}}
 \left \{  \int_{Q }  \prod_{i\in \W} |y_i|^{ 2|\alpha_i|}  \left  |\widehat{\sigma_{ j \ell}}(x, y) \right |^2dy\right \}^{\frac{1}{2}}
\\\\ \displaystyle 
\le C \prod_{i=1}^d  2^{-\rho j n_i/2} \cdot 2^{\gamma  \ell_i n_i /2}  \prod_{i \in \W} 2^{\rho j|\alpha_i |}  \cdot 2^{ -\gamma  \ell_i |\alpha_i|}  
 \left( \int_{\R^n } \left   | \prod_{i\in \W} \p_{\xi_i}^{\alpha_i}\sigma_{j \ell }(x,\xi) \right |^2d\xi\right)^{\frac{1}{2}}
\\\\ \displaystyle 
\le C \prod_{i=1}^d  2^{-\rho j n_i/2} \cdot 2^{\gamma \ell_i n_i /2}  \prod_{i \in \W} 2^{\rho j|\alpha_i |}  \cdot 2^{ -\gamma \ell_i |\alpha_i|}   \cdot 
\prod_{i =1}^d 2^{j_in_i /2 }  \cdot 2^{-j n(1-\rho)/2 } 
\prod_{i \in \W}  2^{-j_i \rho|\alpha_i|}\left \| \sigma \right\|_\S
\\\\ \displaystyle
\le  C \prod_{i \in \U}2^{-\ell_i (1-\gamma)n_i /2}  \cdot   \prod_{i \in \W}2^{-\ell_i \left [ (\rho-\gamma)(n_i /2 -|\alpha_i|)   \right] }  
 \left \|\sigma \right\|_\S.
    \end{array}
\end{equation}

Therefore, if we let 
\[
 0 < \rho < \gamma<1 , \quad |\alpha_i|> n_i/2, \quad i \in \W,
\]
then there exists	$\epsilon>0$ such that 
\[
\int_{\R^n }  \left |\widehat{\sigma_{ j\ell  }}(x, y) \right |dy \le  \prod_{i=1}^{d-1} 2^{- \epsilon  \ell_i }  \left\| \sigma \right\|_\S.
\]
Thus, Lemma \ref{key lemma} holds.

\end{proof}
With Lemma \ref{key lemma} in hand, we now proceed to prove Lemma \ref{BMO}.
\begin{proof}
Fix a function $f\in L^\infty(\R^n)$,  and let   $Q\subset \R^n$ be a cube with side length $r$,  and center  $x_0$.  We will show that 
\begin{equation}\label{aim}
    \frac{1}{|Q|}\int_{\R^n} |T_\sigma f(x) -(T_\sigma f)_Q|dx\le  C \left\| f \right\|_{L^\infty (\R^n)}.
\end{equation}
Decompose $\sigma$ into two parts $\sigma = \sigma^0 +\sigma^1$ such that $\sigma^0$ is supported on  $|\xi| \le 2 r^{-1}$,  $\sigma^1$ is supported on   $|\xi| \ge  r^{-1}$, 
\[
\sigma^0(x, \xi) =\sigma(x, \xi)\phi_0( r \xi). 
\]
Direct calculation shows that
\[
\partial_{x_i}T_{\sigma^0}f(x)=T_{\sigma'}f(x)
\]
where
 \[
\sigma'(x,\xi)=\partial_{x_i}\sigma^0(x,\xi)+2\pi i \xi_i\sigma^0(x,\xi).
\]
Since    $|\xi| \le 2 r^{-1} $,    $\left\| \sigma' \right \|_\S\le  C r^{-1} \left\| \sigma  \right \|_\S$.  by Lemma \ref{key lemma},  
\begin{equation}
\begin{array}{lc}\displaystyle 
    \left\|  \partial_{x_i} T_{\sigma^0} f \right\|_{L^\infty(\R^n)}
    \le C r^{-1} \left\| \sigma \right \|_\S \left\| f \right\|_{L^\infty(\R^n )}.
\end{array}
\end{equation}
Thus, there exists a constant  $a_Q$  such that $| T_{\sigma^0} f(x)-a_Q|$ is bounded on  $Q$, Therefore,  
\begin{equation}\label{estimate j<0}
    \frac{1}{|Q|}\int_Q\left| T_{\sigma^0}f(x)-a_Q\right|dx\le   \left\| \sigma \right\|_\S  \left\| f \right\|_{L^\infty(\R^n)}.
\end{equation}

Now, consider the case of $T_{\sigma^1}$,  Fix a smooth function on $\R^n$, 
 $\lambda$,  such that  $0\le \lambda(x)\le 10$ and  $\lambda(x)\ge1$ on $Q$,  
  $\widehat{\lambda}(\xi)$  is supported on   $|\xi|\le r^{- \rho } $.  Then
\begin{equation}\label{I_1+I_2}
    \lambda(x)T_{\sigma^1}f(x)=T_{\sigma^1}(\lambda f)(x)+[\lambda,T_{\sigma^1}]f(x)=I_1+I_2.
\end{equation}
First we consider $I_1$, 
\[
T_{\sigma^1}(\lambda f)=(T_{\sigma^1} \cdot G_{-n(1-\rho)/4})\cdot (G_{n(1-\rho)/4}(\lambda f)),\quad \widehat{G_\alpha f}(\xi)=(1+|\xi|^2)^{-\alpha} \widehat{f}(\xi).
\]
Note that  $T_{\sigma^1}\cdot G_{-n(1-\rho)/4}$  is a pseudo-differential operator, and the symbol  $\sigma^1(x,\xi)(1+|\xi|^2)^{n(1-\rho)/4}\in \S^0_{\rho,\delta}$, Therefore, by the results from the previous section,
   $T_{\sigma^1}\cdot G_{-n(1-\rho)/4}$  is bounded on 	$L^2(\R^n)$.
\begin{equation}
\begin{array}{lc}\displaystyle
    \left \|T_{\sigma^1}(\lambda f) \right\|_{L^2(\R^n)}^2\le C  \left\| \sigma \right\|_\S^2 \left\| G_{n(1-\rho)/4}(\lambda f)  \right \|_{L^2(\R^n)}^2 
    \\\\ \displaystyle 
    \le C  \left\| \sigma \right\|^2_\S  \left \| f  \right\|_{L^\infty(\R^n) }^2 \left \| G_{n(1-\rho)/4}\lambda \right\|^2_{L^2(\R^n)},
    \end{array}
\end{equation}
 Since 
\[
\begin{array}{lc}\displaystyle
 \left \| G_{n(1-\rho)/4}\lambda \right\|^2_{L^2(\R^n)} = \int_{\R^n} \left  | (1+|\xi|^2)^{-n(1-\rho)/4} \cdot \widehat{ \lambda}(\xi )\right |^2  d\xi 
 \\\\ \displaystyle
 \le C  \int_{|\xi| \le r^{-\rho}} (1+|\xi|^2)^{-n(1-\rho)/2}  d\xi \le \C |Q|.
 \end{array}
 \]
 Therefore, we have 
 \[
   \left \|T_{\sigma^1}(\lambda f) \right\|_{L^2(\R^n)}^2\le C 
   \left\| \sigma \right\|^2_\S  \left \| f  \right\|_{L^\infty(\R^n) }^2 |Q|,
 \]
 and 
 \begin{equation}\label{lambda f}
 \frac{1}{|Q|} \int_{ Q } \left| T_{\sigma^1} (\lambda f)(x)  \right|dx \le \left(   \frac{1}{|Q|} \int_{ Q } \left| T_{\sigma^1} (\lambda f)(x)  \right|^2 dx \right)^{\frac{1}{2}}
 \le C  \left\| \sigma \right\|_\S  \left \| f  \right\|_{L^\infty(\R^n) }.
 \end{equation}
Simple calculations yield that
 \[
 \left[\lambda, T_{\sigma^1} \right] f  = T_{\theta} f,   \quad \theta(x, \xi ) = \int_{\R^n} e^{2 \pi \i x \cdot  \eta } \widehat{\lambda}(\eta) \left[ \sigma^1(x, \xi) - \sigma^1(x, \xi-\eta) \right] d \eta.
 \]
Now,  decompose $\theta$, 
 \[
 \theta_{j \ell} (x, \xi) = \theta(x, \xi) \phi_{j \ell}( r \xi), \quad  \phi_{j \ell}( r \xi)= \prod_{i=1}^d \phi_{j_i}(r|\xi_i| )
 \]
Applying the differential mean value theorem and noting that $\widehat{\lambda}(\eta)$ is supported on  $|\eta| \le r^{-\rho} $, 
it can be seen that
 \[
 \left \|  \theta_{j \ell}  \right \|_\S \le  C  2^{- (j - \ell_M)\rho}   \left \|  \sigma \right \|_\S \le  C 2^{- \epsilon' (j - \ell_M)}   \left \|  \sigma \right \|_\S, \quad \epsilon'>0.
 \]
According to Lemma \ref{key lemma}, we have
 \[
  \left\|  T_{\theta_{j \ell}} f   \right\|_{L^\infty(\R^n)} \le C  2^{- \epsilon' (j - \ell_M)} \cdot  \prod_{i=1}^{d-1} 2^{- \epsilon \ell_i}  \left\| \sigma \right\|_\S
  \left\| f \right \|_{L^\infty(\R^n)},  
 \]
 take $\epsilon'  < \epsilon$, then 
 \begin{equation}\label{T theta}
  \left\|    \left[\lambda, T_{\sigma^1} \right] f      \right\|_{L^\infty(\R^n)} 
=    \left\|  T_{\theta} f   \right\|_{L^\infty(\R^n)} 
  \le C   \left\| \sigma \right\|_\S \left \| f \right \|_{L^\infty (\R^n)},  
 \end{equation}
Combining the estimates (\ref{lambda f}) and (\ref{T theta}), we obtain
 \[
 \frac{1}{|Q|} \int_{\R^n} \left\|   \lambda \cdot T_{\sigma^1}f(x)  \right\| dx \le  C   \left\| \sigma \right\|_\S  \left \| f \right \|_{L^\infty(\R^n)}.
 \]
Moreover, since  $|\lambda(x)|\ge 1$ when  $x \in Q$,   it follows that 
\[
\frac{1}{|Q|}\int_{Q} | T_{\sigma^1} f(x)|dx\le C  \left\| \sigma  \right \|_\S  \left \| f \right \|_{L^\infty(\R^n)}.
\]
By combining the above estimate with (\ref{estimate j<0}), we obtain (\ref{aim}), thus proving Lemma \ref{BMO} and Theorem \ref{multi-parameter pseduo}.
\end{proof}

\addcontentsline{toc}{section}{Acknowledgements}		
	
\end{document}